\documentclass[11pt,reqno,a4paper]{amsart}
\usepackage{tikz}
\usepackage{pstricks}
\usepackage{amsmath}
\usepackage{amssymb}
\usepackage{graphicx}
\usepackage{amsthm}
\usepackage{enumerate}
\usepackage{cite}
\usepackage[mathscr]{eucal}
\usepackage{subfig}
\theoremstyle{plain}
\newtheorem{theorem}{Theorem}[section]

\newtheorem{proposition}[theorem]{Proposition}

\theoremstyle{definition}
\newtheorem{definition}[theorem]{Definition}
\newtheorem{example}[theorem]{Example}

\theoremstyle{remark}

\numberwithin{equation}{section}
\usepackage{tikz}
\usetikzlibrary{arrows.meta}
\usepackage{pgfplots}

\begin{document}

\setcounter {page}{1}
\title[Study of Existence and Stability of Fixed Points]{Study of Existence and Stability of Fixed Points of Hypotenuse Contracting Mappings with Applications to Parabolic PDE}

\author[K.\ Roy, A.\ Banerjee, L.K.\ Dey]
{Kushal Roy$^{1^*}$, Anish Banerjee$^{2}$, Lakshmi Kanta Dey$^{2}$}
\newcommand{\acr}{\newline\indent}

\address{{$^{1}$}   Kushal Roy,
                    Department of Mathematics, 
                    Dr. B.C. Roy Engineering College, 
                    Jemua Road, Fuljhore, Durgapur – 713206,
                    West Bengal, India} 
                    \email{kushal.roy93@gmail.com}

\address{{$^{2}$}   Anish Banerjee,
                    Department of Mathematics,
                    National Institute of Technology
                    Durgapur, West Bengal, India}
                    \email{anish22.mathematics@gmail.com}
\address{{$^{2}$}   Lakshmi Kanta Dey,
                    Department of Mathematics,
                    National Institute of Technology
                    Durgapur, West Bengal, India}
                    \email{lakshmikdey@yahoo.co.in}

\keywords{Fixed points, mapping contracting hypotenuse of a right-angled triangle, Ulam–Hyers stability, parabolic pde\\
\indent {\it AMS Subject Classification} (2020): $47$H$10$, $54$H$25$. 
}

\begin{abstract}
In this article, we introduce a novel class of mappings identified by their property of contracting the hypotenuse of a right-angled triangle in the setting of metric spaces. We establish sufficient conditions ensuring both the existence and uniqueness of fixed points. A geometric analysis, complemented by illustrative diagrams, is provided to differentiate these mappings from other familiar contraction types, namely perimeter and area contractions, supported by examples. Furthermore, we investigate the Ulam–Hyers stability of the associated fixed point equation, thereby strengthening the robustness of the theoretical framework. Finally, the derived results are applied to demonstrate the existence of solutions for a nonhomogeneous linear parabolic partial differential equation (PDE).

\end{abstract}

\maketitle

\section{Introduction and Preliminaries}
The foundational stone of fixed point analysis was laid by S. Banach. In 1922, through the establishment of the Banach contraction principle, the evolution of fixed point theory began. For the past century, researchers have studied and developed further results. The development is mainly achieved through the introduction of new contraction conditions and weakening of the structure of the underlying spaces.  

Most recently, in 2023, Petrov \cite{P23} introduced the notion of a novel class of mappings that contract the perimeters of triangles and established a result ensuring the existence of fixed points of these mappings. The definition of these mappings is provided as follows:
\begin{definition}[\cite{P23}]\label{D11}
Let $(X,d)$ be a metric space with $|X|\ge 3$. Then the mapping $T:X \to X$ is defined as mapping contracting perimeters of triangles if there exists an $\alpha \in [0,1)$ such that 
\begin{equation}\label{cpt}
d(Tx,Ty) +d(Ty,Tz) + d(Tz,Tx) 
\le \alpha [d(x,y) +d(y,z) + d(z,x)]
\end{equation}
for every three pairwise distinct points $x,y,z \in X$.
\end{definition}

Subsequently, Petrov and Bisht \cite{PB24} introduced generalized Kannan type mappings, P{\u{a}}curar and Popescu \cite{PP24} introduced generalized Chatterjea type mappings, and Bisht and Petrov \cite{BP24} introduced generalized Ćirić–Reich–Rus type mappings. Bey et al. \cite{BPS25} introduced generalized Edelstein type mappings. Jleli et al. introduced polynomial type contractions \cite{JPS25}. Roy introduced mappings that contract the axes of an ellipse \cite{R24}, mappings that contract the transverse axis of a hyperbola \cite{R26b}, and mappings that contract the area of geometrical figures \cite{R26a}. Through these articles, different mappings are explored, and a few results are obtained regarding the existence of fixed points of these mappings. For further details, see \cite{BMD2,BMD1,BMD3,BMDS26}.

\medskip
Let us now recall the definition of Ulam–Hyers stability for the fixed point equation.
\begin{definition}\cite{H41}
    Let $(X, d)$ be a metric space, and let $T:X \to X$ be a mapping. The fixed point equation $x=Tx$ is Ulam-Hyers stable if there is a constant $K > 0$ such that for each $\varepsilon >0$ and each $v \in X$ with $d(v,Tv) \leq \varepsilon$, there exists an $u \in X$ with $u=Tu$ such that $d(u,v) \leq K \varepsilon$.
\end{definition}
    
Motivated by the previous results, we aim to explore a new class of mappings that are characterized by their ability to contract the hypotenuse of a right-angled triangle. This class of mappings marks a distinct generalization, as it does not overlap with existing contractions. Furthermore, sufficient conditions are derived to guarantee both the existence and the uniqueness of fixed points. Subsequently, we provide a geometric interpretation of the proposed mapping to establish its distinction from other well-known mappings through pictorial representation. Furthermore, illustrative examples are provided to support the theoretical findings. Next, we investigate the Ulam-Hyers stability of the fixed point equation of the proposed mappings. Finally, we apply the theoretical findings to establish the existence of a solution for a nonhomogeneous linear parabolic partial differential equation. 



\section{Mappings contracting hypotenuse of a right-angled triangle}
We begin the section with the introduction of mappings that contract the hypotenuse of a right-angled triangle as follows:
\begin{definition}\label{21}
Let $(X,d)$ be a metric space with $|X|\ge 3$ and $T:X \to X$ be a mapping. Then $T$ is said to be a mapping contracting hypotenuse of a right-angled triangle if there exists $0 \le \beta<1$ such that the following inequality:
\begin{align}\label{e21}
[d(Tx,Ty)]^2+[d(Ty,Tz)]^2\le \beta ([d(x,y)]^2+[d(y,z)]^2),
\end{align}
holds for any three pairwise distinct points $x,y,z \in X$.
\end{definition}
The following result illustrates that a sub-collection of Banach contraction mappings is contained in the collection of mappings that contract the hypotenuse of a right-angled triangle.

\begin{proposition}\label{22}
    Let $(X, d)$ be a metric space with $|X| \geq 3$ and $T:X \to X$ be a Banach contraction mapping with contraction constant $k \in [0, \frac{1}{\sqrt{2}})$. Then $T$ is a mapping contracting hypotenuse of a right-angled triangle.
\end{proposition}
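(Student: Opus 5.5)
The plan is to produce one constant $\beta$ built from $k$, namely $\beta=2k^2$, and check that it works. The hypothesis $k<\frac{1}{\sqrt 2}$ is exactly what makes $\beta=2k^2<1$, and the crucial term is the image of the hypotenuse $d(x,z)$ of the "right-angled triangle" with legs $d(x,y)$ and $d(y,z)$.

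Fix three pairwise distinct points $x,y,z\in X$, and write $a=d(x,y)$, $b=d(y,z)$. First, the Banach condition gives $d(Tx,Ty)\le k a$ and $d(Ty,Tz)\le k b$. Squaring and adding,
\begin{align*}
[d(Tx,Ty)]^2+[d(Ty,Tz)]^2\le k^2(a^2+b^2)\le 2k^2(a^2+b^2).
\end{align*}
So inequality \eqref{e21} holds with $\beta=2k^2$.

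Second, for the hypotenuse $d(Tx,Tz)$, I will combine the Banach condition with the triangle inequality and the elementary bound $(a+b)^2\le 2(a^2+b^2)$, which follows from $2ab\le a^2+b^2$:
\begin{align*}
[d(Tx,Tz)]^2\le k^2[d(x,z)]^2\le k^2(a+b)^2\le 2k^2(a^2+b^2).
\end{align*}
This is the step where the restriction on $k$ genuinely enters. The constant $2$ from $(a+b)^2\le 2(a^2+b^2)$ is sharp when $a=b$, so we need $2k^2<1$, that is, $k<\frac{1}{\sqrt2}$.

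Setting $\beta=2k^2\in[0,1)$, both the leg-sum form \eqref{e21} and the hypotenuse form of the contraction hold for every triple of distinct points. Hence $T$ is a mapping contracting hypotenuse of a right-angled triangle in the sense of Definition \ref{21}. The only real obstacle is the triangle-inequality estimate in the second step; the rest is direct.
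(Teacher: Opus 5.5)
Your proposal is correct: your first display is exactly the paper's proof, which squares the two Banach inequalities, adds them, and bounds the result by $2k^2(a^2+b^2)$ with $2k^2<1$. Your second step is superfluous, though, and your reasons for it are wrong. Definition~\ref{21} places no separate condition on $d(Tx,Tz)$; it only requires \eqref{e21} for every ordering of three distinct points. So the restriction on $k$ does not ``genuinely enter'' at that step, and your own first display already shows that $\beta=k^2$ works for every $k<1$.
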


\begin{proof}
Since $T$ is a Banach contraction with contraction constant $k$, it follows that
\begin{align*}
    d(Tx,Ty) \leq k d(x,y) \text{ for all } x, y \in X.
\end{align*}
Therefore, for any three distinct elements $x, y, z \in X$, we have
\begin{align*}
    [d(Tx,Ty)]^2 +[d(Ty,Tz)]^2 \le 2{k}^2 ([d(x,y)]^2+ [d(y,z)]^2).
\end{align*}
As $k < \frac{1}{\sqrt{2}}$ so $2{k}^2 < 1$ and hence, $T$ is a mapping contracting hypotenuse of a right-angled triangle.
\end{proof}
In the following, we provide an example of a mapping contracting hypotenuse of a right-angled triangle, demonstrating its distinctness from some well-known classes of mappings:
\begin{example}\label{23}
Let $X$ be a nonempty set of cardinality greater than or equal to $3$. Choose two distinct elements $u,v \in X$ and consider two nonempty disjoint subsets $U$ and $V$ of $X$ containing $u$ and $v$ respectively such that $X=U \cup V$. Define a metric $d:X \times X \to [0,\infty)$ by
\begin{align*}
    d(x,y)=
    \begin{cases}
        &0, \text{ for }x=y,\\
        &1, \text{ for }x=u,y=v,\\
        &2, \text{ otherwise}.
    \end{cases}
\end{align*}
 Now let us define a mapping $T:X \to X$ by
\begin{align*}
    Tx=
    \begin{cases}
        &u, \text{ if }x \in U,\\
        &v, \text{ if }x \in V.
    \end{cases}
\end{align*}
 
Then for any three distinct elements $a,b,c$, we have the following cases:

\medskip
\noindent
\textbf{Case-I:} If $a,b,c \in U$, then it is obvious that $[d(Ta,Tb)]^2 + [d(Tb,Tc)]^2=0$.

\smallskip
\noindent
\textbf{Case-II:} If $a,b,c \in V$, then similarly $[d(Ta,Tb)]^2 + [d(Tb,Tc)]^2=0$.

\smallskip
\noindent
\textbf{Case-III:} If $a,b \in U$ and $c \in V$, then

\smallskip
$[d(Ta,Tb)]^2 + [d(Tb,Tc)]^2=[d(u,v)]^2=1$.

\smallskip
$[d(Ta,Tc)]^2 + [d(Tc,Tb)]^2=2[d(u,v)]^2=2$.

\smallskip
$[d(Tb,Ta)]^2 + [d(Ta,Tc)]^2=[d(u,v)]^2=1$ 

\smallskip
and  $[d(a,b)]^2 + [d(b,c)]^2 \geq 5$ for any $\{a,b,c\} \subseteq X$.

\smallskip
\noindent
\textbf{Case-IV:} If $a\in U$ and $b,c \in V$, then it follows analogously from Case III.\\
Therefore, it is evident that 
\begin{align*}
    [d(Tx,Ty)]^2+[d(Ty,Tz)]^2\le \frac{2}{5} ([d(x,y)]^2+[d(y,z)]^2), 
\end{align*}
for all pairwise distinct $x,y,z \in X$. Hence, $T$ is mapping contracting hypotenuse of a right-angled triangle. However, $T$ does not belong to any of the contraction classes of Banach, Kannan, Reich, Hardy-Rogers or \'Ciri\'c. 
\end{example}
Next, we check the continuity of a mapping contracting hypotenuse of a right-angled triangle.
\begin{theorem}\label{24}
In a metric space, a mapping contracting hypotenuse of a right-angled triangle is continuous. 
\end{theorem}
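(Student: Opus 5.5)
We will prove continuity sequentially. Fix $x_0\in X$ and a sequence $x_n\to x_0$; we must show $Tx_n\to Tx_0$. We may discard the indices with $x_n=x_0$, since for those $d(Tx_n,Tx_0)=0$ trivially. So we assume $x_n\neq x_0$ for all $n$. If $x_0$ is an isolated point of $X$, then $x_n=x_0$ eventually and there is nothing to prove; hence the only substantive case is an infinite tail of terms distinct from $x_0$.

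The key step is to manufacture a third point so that the inequality \eqref{e21} of Definition \ref{21} applies. Since $|X|\ge 3$, fix a point $z\in X\setminus\{x_0\}$. For all $n$ large enough we have $d(x_n,x_0)<d(z,x_0)$, so $x_n\neq z$, and the three points $x_n, x_0, z$ are pairwise distinct. Apply \eqref{e21} with $x=x_n$, $y=x_0$ and the third point $z$ in the role of the hypotenuse's far vertex:
\begin{align*}
[d(Tx_n,Tx_0)]^2 \le [d(Tx_n,Tx_0)]^2+[d(Tx_0,Tz)]^2\le \beta\big([d(x_n,x_0)]^2+[d(x_0,z)]^2\big).
\end{align*}
This bound does not tend to zero because of the fixed term $[d(x_0,z)]^2$, so we need to pair the points differently.

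This is the main obstacle: the definition only controls the sum over two consecutive sides $xy,yz$, and a fixed third point contributes a non-vanishing term. The remedy is to choose the ordering so that both sides on the right shrink. We use the triple $x=x_n$, $y=x_0$, $z=x_m$ with $m\neq n$ and $x_m\neq x_n$, both close to $x_0$. Then
\begin{align*}
[d(Tx_n,Tx_0)]^2\le \beta\big([d(x_n,x_0)]^2+[d(x_0,x_m)]^2\big).
\end{align*}
Given $\varepsilon>0$, pick $N$ with $d(x_k,x_0)<\varepsilon$ for $k\ge N$. For each $n\ge N$, choose some $m\ge N$ with $x_m\neq x_n$. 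We then get $d(Tx_n,Tx_0)^2\le 2\beta\varepsilon^2$, which gives continuity.

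It remains to handle the degenerate situation where no such $m$ exists, i.e.\ the tail of $(x_n)$ is eventually a single point $w\neq x_0$. This cannot happen, because then $d(w,x_0)=\lim d(x_n,x_0)=0$, contradicting $w\neq x_0$. Hence the tail takes infinitely many values, and the choice above is always possible. Therefore $T$ is continuous at every $x_0$.
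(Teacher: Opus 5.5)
Your proof is correct and uses the same key idea as the paper: make $x_0$ the middle vertex of the triple $(x, x_0, y)$, with the third point $y$ also close to $x_0$. Then both legs on the right of \eqref{e21} shrink, and you get $d(Tx,Tx_0)\le\sqrt{2\beta}\,\varepsilon$. The paper phrases this with $\varepsilon$--$\delta$ and takes $y$ from the infinitely many points in a small ball around the accumulation point $x_0$, whereas you take it from the sequence itself; the difference is only cosmetic, and your form of the bound also covers $\beta=0$ without the division in the paper's choice $\delta<\varepsilon/\sqrt{2\beta}$.
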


\begin{proof}
Let $(X,d)$ be a metric space with $|X|\ge 3$ and $T:X \to X$ be a mapping contracting hypotenuse of a right-angled triangle. Suppose that $x_0 \in X$ is arbitrary. If $x_0$ is an isolated point, then clearly $T$ is continuous at $x_0$. Otherwise, $x_0$ is an accumulation point. Now, for any $\delta>0$, $B(x_0,\delta)=\{x \in X: d(x,x_0)<\delta\}$ contains infinitely many points. So, consider two distinct points $x,y \in B(x_0,\delta)$ other than $x_0$. Now, from \eqref{e21} we have
\begin{align*}
[d(Tx,Tx_0)]^2
&\le [d(Tx,Tx_0)]^2 +[d(Tx_0,Ty)]^2\nonumber\\
&\le \beta\left([d(x,x_0)]^2 +[d(x_0,y)]^2\right)\nonumber\\
& <2\beta \delta^2\nonumber\\
\implies d(Tx,Tx_0) &<\sqrt{2\beta}\delta.
\end{align*}
This shows that for any $\varepsilon>0$ if we set $\delta<\frac{\varepsilon}{\sqrt{2\beta}}$, then 
$$d(Tx,Tx_0)<\varepsilon\text{ whenever }d(x,x_0)<\delta.$$
Thus, the result follows.
\end{proof}
We now present a result that establishes the existence of fixed points for mappings contracting hypotenuse of a right-angled triangle.

\begin{theorem}\label{25}
Let $T$ be a mapping contracting hypotenuse of a right-angled triangle defined over a complete metric space $(X,d)$ with $|X|\ge 3$. Then the fixed point set $Fix(T)$ is non-empty, provided $T$ does not possess any periodic point of prime period $2$.
\end{theorem}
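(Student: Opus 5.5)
We use the Picard iteration. Fix $x_0\in X$ and set $x_{n+1}=Tx_n$. If $x_n=x_{n+1}$ for some $n$, then $x_n\in Fix(T)$ and we are done. So assume $x_n\neq x_{n+1}$ for all $n$.

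\textbf{Step 1: consecutive triples are pairwise distinct.} We check that $x_{n-1},x_n,x_{n+1}$ are pairwise distinct. Consecutive terms differ by assumption. If $x_{n+1}=x_{n-1}$, then $T x_n = x_{n-1}$ and $Tx_{n-1}=x_n\neq x_{n-1}$. Hence $x_{n-1}$ is a periodic point of prime period $2$, which the hypothesis excludes.

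\textbf{Step 2: a geometric decay.} Apply \eqref{e21} to the triple $x=x_{n-1}$, $y=x_n$, $z=x_{n+1}$. Writing $a_n=[d(x_n,x_{n+1})]^2$, this gives
\begin{align*}
a_n+a_{n+1}\le \beta\,(a_{n-1}+a_n).
\end{align*}
Put $s_n=a_{n-1}+a_n$. Then $s_{n+1}\le\beta s_n$, so $s_n\le \beta^{n-1}s_1$. Since $a_n\le s_n$, we get
\begin{align*}
d(x_n,x_{n+1})\le \sqrt{s_1}\,(\sqrt\beta)^{\,n-1}.
\end{align*}
With $\sqrt\beta<1$, the triangle inequality and a geometric series show that $(x_n)$ is Cauchy. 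By completeness, $x_n\to x^*$.

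\textbf{Step 3: the limit is a fixed point.} Theorem~\ref{24} says $T$ is continuous, so $Tx^*=\lim Tx_n=\lim x_{n+1}=x^*$. Hence $Fix(T)\neq\emptyset$.

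\textbf{Main obstacle.} The delicate step is Step~1. The contraction condition applies only to three pairwise distinct points, and excluding $x_{n-1}=x_{n+1}$ is exactly where the no-period-$2$ hypothesis is used. Once the triples are known to be distinct, the rest is routine.
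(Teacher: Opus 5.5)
Your proof is correct and follows the paper's argument: Picard iteration, the no-period-$2$ hypothesis to make consecutive triples pairwise distinct, the contraction applied to $(x_{n-1},x_n,x_{n+1})$, a Cauchy estimate, and continuity from Theorem~\ref{24} to identify the limit as a fixed point. The only difference is in the bookkeeping, and it is a simplification. The paper discards the middle term to get the two-step bound $d(x_{n+1},x_{n+2})\le\sqrt{\beta}\,d(x_{n-1},x_n)$, which forces an even/odd four-case summation. Your one-step recursion $s_{n+1}\le\beta s_n$ for $s_n=a_{n-1}+a_n$ gives a single geometric bound on every $d(x_n,x_{n+1})$ and avoids that case split.
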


\begin{proof}
Choose $x_0\in X$ arbitrarily and construct the Picard iterating sequence $x_n=Tx_{n-1}=T^n x_0$, $n\in \mathbb{N}$. If for some $n\ge 1$, $x_n=Tx_{n-1}$, then $T$ has a fixed point in $X$. Therefore, without loss of generality, we assume that $x_{n-1} \neq x_n$ for all $n \in \mathbb{N}$. Also, since $T$ does not have any periodic point of prime period $2$, it follows that $x_{n-1} \neq x_{n+1}$ for all $n \in \mathbb{N}$. This shows that for any $n \in \mathbb{N}$, $\{x_{n-1}, x_n, x_{n+1}\}$ is a set of pairwise distinct elements in $X$. Now, for any $n \in \mathbb{N}$, we have
\begin{align}\label{e24}
&[d(Tx_{n-1},Tx_{n})]^2 + [d(Tx_{n},Tx_{n+1})]^2 
\le \beta ([d(x_{n-1},x_{n})]^2+[d(x_{n},x_{n+1})]^2)\nonumber\\
\implies &[d(x_{n},x_{n+1})]^2+[d(x_{n+1},x_{n+2})]^2 
\le \beta ([d(x_{n-1},x_{n})]^2+[d(x_{n},x_{n+1})]^2)\nonumber\\
\implies &[d(x_{n+1},x_{n+2})]^2 
\le \beta ([d(x_{n-1},x_{n})]^2+[d(x_{n},x_{n+1})]^2)-[d(x_{n},x_{n+1})]^2\nonumber\\
\implies &[d(x_{n+1},x_{n+2})]^2 
\le \beta [d(x_{n-1},x_{n})]^2-(1-\beta)[d(x_{n},x_{n+1})]^2\nonumber\\
\implies &[d(x_{n+1},x_{n+2})]^2 \le \beta [d(x_{n-1},x_{n})]^2\nonumber\\
\implies &d(x_{n+1},x_{n+2}) \le \sqrt{\beta} d(x_{n-1},x_{n})=\sigma d(x_{n-1},x_{n}),\text{where }\sigma=\sqrt{\beta}.
\end{align}
From (\ref{e24}) we get, for all $m \in \mathbb{N}$ 
\begin{align*}
&d(x_{2m+1},x_{2m+2}) \le \sigma d(x_{2m-1},x_{2m}) \le \cdots \sigma^m d(x_1,x_2),\\
&d(x_{2m},x_{2m+1}) \le \sigma d(x_{2m-2},x_{2m-1}) \le \cdots \sigma^m d(x_0,x_1).
\end{align*}
For $n \in \mathbb{N}$ and $p=0,1,2,\cdots$, we have 
\begin{align}\label{e25}
d(x_n,x_{n+p})\le d(x_n,x_{n+1})+d(x_{n+1},x_{n+2})+\cdots+d(x_{n+p-1},x_{n+p}).
\end{align}
We now observe that four distinct cases arise, which will be examined in the following:

\smallskip
\textbf{Case I:} Let $n=2j$, $p=2r$ with $j \in \mathbb{N} \cup \{0\}$, $r \in \mathbb{N}$, then
\begin{align}\label{e26}
d(x_{2j},x_{2j+2r})
&\le d(x_{2j},x_{2j+1})+d(x_{2j+1},x_{2j+2})+\cdots\nonumber\\
&+d(x_{2j+2r-2},x_{2j+2r-1})+d(x_{2j+2r-1},x_{2j+2r})\nonumber\\
&\le \sigma^j d(x_0,x_1)+\sigma^j d(x_1,x_2)+\cdots+\sigma^{j+r-1} d(x_0,x_1)+\sigma^{j+r-1} d(x_1,x_2)\nonumber\\
&=(\sigma^j+\cdots+\sigma^{j+r-1})d(x_0,x_1)+(\sigma^j+\cdots+\sigma^{j+r-1}) d(x_1,x_2)\nonumber\\
&\le \sigma^j \frac{d(x_0,x_1)+d(x_1,x_2)}{1-\sigma}.
\end{align}

\textbf{Case II:} Let $n=2j$, $p=2r+1$ with $j \in \mathbb{N} \cup \{0\}$, $r \in \mathbb{N} \cup \{0\}$, then
\begin{align}\label{e27}
d(x_{2j},x_{2j+2r+1})
&\le d(x_{2j},x_{2j+1})+d(x_{2j+1},x_{2j+2})+\cdots\nonumber\\
&+d(x_{2j+2r-1},x_{2j+2r})+d(x_{2j+2r},x_{2j+2r+1})\nonumber\\
&\le \sigma^j d(x_0,x_1)+\sigma^j d(x_1,x_2)+\cdots+\sigma^{j+r-1} d(x_1,x_2)+\sigma^{j+r} d(x_0,x_1)\nonumber\\
&\le (\sigma^j+\cdots+\sigma^{j+r})d(x_0,x_1)+(\sigma^j+\cdots+\sigma^{j+r})d(x_1,x_2)\nonumber\\
&\le \sigma^j \frac{d(x_0,x_1)+d(x_1,x_2)}{1-\sigma}.
\end{align}

\textbf{Case III:} Let $n=2j+1$, $p=2r$ with $j \in \mathbb{N} \cup \{0\}$, $r \in \mathbb{N}$, then
\begin{align}\label{e28}
d(x_{2j+1},x_{2j+1+2r})
&\le d(x_{2j+1},x_{2j+2})+d(x_{2j+2},x_{2j+3})+\cdots\nonumber\\
&+d(x_{2j+2r-1},x_{2j+2r})+d(x_{2j+2r},x_{2j+2r+1})\nonumber\\
&\le \sigma^j d(x_1,x_2)+\sigma^{j+1} d(x_0,x_1)+\cdots+\sigma^{j+r-1} d(x_1,x_2)+\sigma^{j+r} d(x_0,x_1)\nonumber\\
&=(\sigma^j+\cdots+\sigma^{j+r})d(x_0,x_1)+(\sigma^j+\cdots+\sigma^{j+r})d(x_1,x_2)\nonumber\\
&\le \sigma^j \frac{d(x_0,x_1)+d(x_1,x_2)}{1-\sigma}.
\end{align}

\textbf{Case IV:} Let $n=2j+1$, $p=2r+1$ with $j \in \mathbb{N} \cup \{0\}$, $r \in \mathbb{N} \cup \{0\}$, then
\begin{align}\label{e29}
d(x_{2j+1},x_{2j+2r+2})
&\le d(x_{2j+1},x_{2j+2})+d(x_{2j+2},x_{2j+3})+\cdots\nonumber\\
&+d(x_{2j+2r},x_{2j+2r+1})+d(x_{2j+2r+1},x_{2j+2r+2})\nonumber\\
&\le \sigma^j d(x_1,x_2)+\sigma^{j+1} d(x_0,x_1)+\cdots+\sigma^{j+r} d(x_0,x_1)+\sigma^{j+r} d(x_1,x_2)\nonumber\\
&=(\sigma^j+\cdots+\sigma^{j+r})d(x_0,x_1)+(\sigma^j+\cdots+\sigma^{j+r})d(x_1,x_2)\nonumber\\
&\le \sigma^j \frac{d(x_0,x_1)+d(x_1,x_2)}{1-\sigma}.
\end{align}
From \eqref{e25}--\eqref{e29}, we have $\displaystyle \lim_{n \to \infty} d(x_n,x_{n+p})=0$. Thus, $\{x_n\}$ is a Cauchy sequence in $X$. Since $X$ is complete, there exists $u\in X$ such that $x_n \to u$ as $n \to \infty$. Since $T$ is continuous, it follows that $x_{n+1}=Tx_n\to Tu$ whenever $n\to \infty.$ Thus, due to the completeness of $(X,d)$, we have $Tu=u$ and consequently $u$ is a fixed point of $T$. 
\end{proof}
The subsequent result addresses the cardinality of the fixed point set.
\begin{theorem}\label{26}
Let $T$ be a mapping contracting hypotenuse of a right-angled triangle defined over a metric space $(X,d)$ with $|X|\ge 3$. Then, the fixed point set $Fix(T)$ is non-empty and contains at most two elements.
\end{theorem}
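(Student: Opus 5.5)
The plan is to split the statement into its two claims and treat them separately. The second claim, that $Fix(T)$ has at most two elements, follows almost immediately from Definition~\ref{21} and needs no completeness. The nonemptiness claim cannot hold for an arbitrary metric space. For instance, a Banach contraction with constant below $\frac{1}{\sqrt{2}}$ on an incomplete space may have no fixed point, and by Proposition~\ref{22} it is still a mapping contracting hypotenuse of a right-angled triangle. I will therefore read the nonemptiness part under the standing hypotheses of Theorem~\ref{25}: $(X,d)$ complete and $T$ without periodic points of prime period $2$.

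\emph{Step 1 (nonemptiness).} Under the hypotheses of Theorem~\ref{25}, I will simply invoke that theorem. The Picard iterates $x_n=T^nx_0$ form a Cauchy sequence. Its limit $u$ satisfies $Tu=u$ by the continuity established in Theorem~\ref{24}. Hence $Fix(T)\neq\emptyset$.

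\emph{Step 2 (at most two fixed points).} Suppose, for contradiction, that $Fix(T)$ contains three pairwise distinct points $x,y,z$. Since $Tx=x$, $Ty=y$ and $Tz=z$, inequality \eqref{e21} applied to this triple becomes
\begin{align*}
[d(x,y)]^2+[d(y,z)]^2\le \beta\left([d(x,y)]^2+[d(y,z)]^2\right).
\end{align*}
Because $x\neq y$, the quantity $S=[d(x,y)]^2+[d(y,z)]^2$ is strictly positive. The inequality then reads $(1-\beta)S\le 0$ with $1-\beta>0$, which is impossible. Hence $|Fix(T)|\le 2$.

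\emph{Sharpness and the main obstacle.} I will note that the bound two cannot be improved. In Example~\ref{23}, with $U=\{u\}$ and $V=X\setminus\{u\}$, both $u$ and $v$ are fixed by $T$, so exactly two fixed points occur. This is because \eqref{e21} only constrains triples and says nothing about pairs of fixed points. The only real obstacle is therefore the hypothesis bookkeeping: the existence half must be tied to Theorem~\ref{25}, while the cardinality half holds in any metric space with $|X|\ge 3$. The argument for the upper bound itself is a one-line consequence of $\beta<1$.
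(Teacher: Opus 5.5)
Your proposal is correct and follows the paper's proof. Nonemptiness comes from invoking Theorem~\ref{25}, and the bound $|Fix(T)|\le 2$ comes from applying \eqref{e21} to three distinct fixed points and using $\beta<1$ to force $(1-\beta)\bigl([d(x,y)]^2+[d(y,z)]^2\bigr)\le 0$. The paper likewise relies silently on the completeness and no-period-$2$ hypotheses of Theorem~\ref{25} for the existence half, so making them explicit is a real improvement on the statement as written; your counterexample via Proposition~\ref{22} on an incomplete space is valid.
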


\begin{proof}
Theorem \ref{25} confirms that $T$ has a fixed point and therefore, $Fix(T)$ is non-empty. If possible, suppose that $Fix(T)$ contains three distinct elements $u,v$ and $w$. Then, from  \eqref{21}, we see that
\begin{align*}
& [d(Tu,Tv)]^2+[d(Tv,Tw)]^2 \le [d(u,v)]^2+[d(v,w)]^2\\
\implies &[d(u,v)]^2+[d(v,w)]^2 \le [d(u,v)]^2+[d(v,w)]^2\\
\implies &(1-\beta)([d(u,v)]^2+[d(v,w)]^2) \le 0\\
\implies &[d(u,v)]^2+[d(v,w)]^2 =0\\
\implies &d(u,v)=d(v,w) =0, \text{ leads to a contradiction}.
\end{align*}
Hence, the result follows.
\end{proof}
Next, we present an illustrative example of a mapping contracting hypotenuse of a right-angled triangle in support of Theorem \ref{25}.

\begin{example}\label{27}
Consider $(\mathbb{N},d)$ as a standard metric space. Define $T:\mathbb{N}\to \mathbb{N}$ by
\begin{align*}
Tx=   
\begin{cases}
    \frac{x}{2}, &\text{ if $x$ is even},\\
    \frac{x+1}{2}, &\text{ if $x$ is odd}. 
\end{cases}
\end{align*}

To show that $T$ is a mapping contracting hypotenuse of a right-angled triangle, we consider three distinct points $x,y$ and $z$ and for different possible cases we will ensure whether the ratio $R=\frac{[d(Tx,Ty)]^2 +[d(Ty,Tz)]^2}{[d(x,y)]^2 +[d(y,z)]^2}$ (from Definition \ref{21}) lies on $[0,1)$. Suppose that $x - y=a$, $y - z=b$ and $z - x=c$. Now, the following cases occur:\\
\textbf{Case I:} If $x,y,z$ are all even, then there are three sub-cases depending on the choices of base and height.\\
\textbf{Sub-case I:} Taking $xy$ and $yz$ as the base and height, we get 
 \begin{align}\label{2811}
    R=\frac{[d(Tx,Ty)]^2 +[d(Ty,Tz)]^2}{[d(x,y)]^2 +[d(y,z)]^2}
     =\frac{(\frac{x-y}{2})^2 +(\frac{y-z}{2})^2}{(x-y)^2 +(y-z)^2}
     =\frac{1}{4}.
\end{align}
\textbf{Sub-case II:} Taking $xz$ and $zy$ as the base and height, we get 
\begin{align}\label{2812}
    R=\frac{[d(Tx,Tz)]^2 +[d(Tz,Ty)]^2}{[d(x,z)]^2 +[d(z,y)]^2}
     =\frac{(\frac{x-z}{2})^2 +(\frac{z-y}{2})^2}{(x-z)^2 +(z-y)^2}
     =\frac{1}{4}.
\end{align}
\textbf{Sub-case III:} Taking $yx$ and $xz$ as the base and height, we get 
\begin{align}\label{2813}
    R=\frac{[d(Ty,Tx)]^2 +[d(Tx,Tz)]^2}{[d(y,x)]^2 +[d(x,z)]^2}
     =\frac{(\frac{y-x}{2})^2 +(\frac{x-z}{2})^2}{(y-x)^2 +(x-z)^2}
     =\frac{1}{4}.
\end{align}   
\textbf{Case II:} If $x$ and $y$ are even and $z$ is odd, then there are three sub-cases depending on the choices of base and height.\\
\textbf{Sub-case I:} Taking $xy$ and $yz$ as the base and height, we have
\begin{align*}
    R=\frac{[d(Tx,Ty)]^2 +[d(Ty,Tz)]^2}{[d(x,y)]^2 +[d(y,z)]^2}
     =\frac{(\frac{x-y}{2})^2 +(\frac{y-z-1}{2})^2}{(x-y)^2 +(y-z)^2}.
\end{align*}
Now, substituting $a$ and $b$ in $R$, we have
\begin{align*}
    R= \frac{a^2 + (b-1)^2}{4(a^2 + b^2)}
     =\frac{a^2 + b^2 - 2b + 1}{4(a^2 + b^2)}
     = \frac{1}{4} + \frac{1 - 2b}{4(a^2 + b^2)}.
\end{align*}
Since $a,b \in \mathbb{N}$, we get
\begin{align*}
    a^2 + (b-1)^2 \ge 1
    \implies \frac{-2b}{a^2 + b^2} \le 1
    \implies \frac{1 - 2b}{a^2 + b^2} \le \frac{1}{2} + 1
    \implies \frac{1 - 2b}{4(a^2 + b^2)} \le \frac{3}{8}.
\end{align*}
Therefore, 
\begin{align}\label{2821}
   R=\frac{(\frac{x-y}{2})^2 +(\frac{y-z-1}{2})^2}{(x-y)^2 +(y-z)^2} \le \frac{5}{8}. 
\end{align}
\textbf{Sub-case II:} Taking $xz$ and $zy$ as the base and height, we get
\begin{align*}
    R=\frac{[d(Tx,Tz)]^2 +[d(Tz,Ty)]^2}{[d(x,z)]^2 +[d(z,y)]^2}
     =\frac{(\frac{x-z-1}{2})^2 +(\frac{z-y+1}{2})^2}{(x-z)^2 +(z-y)^2}.
\end{align*}
Now, substituting $b$ and $c$ in $R$, we have
\begin{align*}
    R 
     = \frac{(c+1)^2 + (b-1)^2}{4(c^2 + b^2)}
     =\frac{c^2 + b^2 + 2(c - b) + 2}{4(c^2 + b^2)}
     = \frac{1}{4} + \frac{2(c - b) + 2}{4(c^2 + b^2)}.
\end{align*}
Since $b,c \in \mathbb{N}$, we get
\begin{align*}
    (c-1)^2 + (b+1)^2 \ge 4
    \implies 2(c - b) + 2 \le c^2 + b^2
    \implies \frac{2(c - b) + 2}{4(c^2 + b^2)} \le \frac{1}{4}.
\end{align*}
Therefore, 
\begin{align}\label{2822}
   R= \frac{(\frac{x-z-1}{2})^2 +(\frac{z-y+1}{2})^2}{(x-z)^2 +(z-y)^2} \le \frac{1}{2}. 
\end{align}
\textbf{Sub-case III:} Taking $yx$ and $xz$ as the base and height, we have
\begin{align*}
    R=\frac{[d(Ty,Tx)]^2 +[d(Tx,Tz)]^2}{[d(y,x)]^2 +[d(x,z)]^2}
     =\frac{(\frac{y-x}{2})^2 +(\frac{x-z-1}{2})^2}{(y-x)^2 +(x-z)^2}.
\end{align*}
Now, substituting $a$ and $c$ in $R$, we have
\begin{align*}
    R= \frac{a^2 + (c+1)^2}{4(a^2 + c^2)}
     =\frac{a^2 + c^2 + 2c + 1}{4(a^2 + c^2)}
     = \frac{1}{4} + \frac{1 + 2c}{4(a^2 + c^2)}.
\end{align*}
Since $a,c \in \mathbb{N}$, we get
\begin{align*}
    a^2 + (c-1)^2 \ge 1
    \implies \frac{2c}{a^2 + c^2} \le 1
    \implies \frac{1 + 2c}{a^2 + c^2} \le \frac{1}{2} + 1
    \implies \frac{1 + 2c}{4(a^2 + c^2)} \le \frac{3}{8}.
\end{align*}
Therefore, 
\begin{align}\label{2823}
   R= =\frac{(\frac{y-x}{2})^2 +(\frac{x-z-1}{2})^2}{(y-x)^2 +(x-z)^2} \le \frac{5}{8}. 
\end{align}

Following this method, six additional cases arise based on whether $x, y,$ and $z$ are even or odd. Each of these cases further splits into three sub-cases depending on the choice of base and height. We have checked all the cases and observed that 
\begin{align*}
    [d(Tx,Ty)]^2+[d(Ty,Tz)]^2\le \frac{5}{8} ([d(x,y)]^2+[d(y,z)]^2),
\end{align*}
for all pairwise distinct $x,y,z \in \mathbb{N}$. Hence, $T$ is mapping contracting hypotenuse of a right-angled triangle with $\beta \in [\frac{5}{8},1)$. In addition, $T$ does not possess any periodic point of prime period $2$. Thus, by Theorem \ref{25}, $T$ has a fixed point. Here, $\{1\}$ is the unique fixed point of $T$. However, it should be noted that $T$ does not satisfy any of the contractive conditions of Banach, Kannan, Chatterjea, or Reich.

\end{example}

Now, we provide an example of a mapping contracting hypotenuse of a right-angled triangle with two fixed points.
\begin{example}\label{28}
Let us consider $\mathbb{N}$ equipped with the Euclidean metric $d$. Define a mapping $T:\mathbb{N}\to \mathbb{N}$ by
\begin{align*}
Tx=   
\begin{cases}
    &1, \text{ for }x=1,\\
    &2, \text{ for }x \ge 2. 
\end{cases}
\end{align*}

Then for three distinct points $x,y$ and $z$, we have the following cases:\\
\textbf{Case-I:} If $x=1$ and $y,z\ge 2$, then 

\begin{align*}
[d(T1,Ty)]^2+[d(Ty,Tz)]^2=&[(1-2)]^2+[(2-2)]^2=1\\ 
\text {and}\\
[d(1,y)]^2+[d(y,z)]^2=&[(1-y)]^2+[(y-z)]^2\ge 2. 
\end{align*}

\begin{align*}
[d(T1,Tz)]^2+[d(Tz,Ty)]^2=&[(1-2)]^2+[(2-2)]^2=1\\ 
\text {and}\\
[d(1,z)]^2+[d(z,y)]^2=&[(1-z)]^2+[(z-y)]^2\ge 2. 
\end{align*}

\begin{align*}
[d(Ty,T1)]^2+[d(T1,Tz)]^2=&[(2-1)]^2+[(1-2)]^2=2\\ 
\text {and}\\
[d(y,1)]^2+[d(1,z)]^2=&[(1-y)]^2+[(1-z)]^2\ge 5. 
\end{align*}
\textbf{Case-II:} If $x,y,z \ge 2$, then $[d(Tx,Ty)]^2+[d(Ty,Tz)]^2=0$ for any $x,y,z \in \mathbb{N}$.\\
Therefore it is evident that 
\begin{align*}
    [d(Tx,Ty)]^2+[d(Ty,Tz)]^2\le \frac{1}{2} ([d(x,y)]^2+[d(y,z)]^2),
\end{align*}
for all pairwise distinct $x,y,z \in \mathbb{N}$. Hence, $T$ is a mapping contracting hypotenuse of a right-angled triangle which is not an usual contractive mapping. Moreover, it satisfies all the conditions of Theorem \ref{25} and possesses exactly two fixed points $1$ and $2$.
\end{example}

In Example~\ref{28}, we observed that a mapping contracting hypotenuse of a right-angled triangle may admit multiple fixed points. Conversely, one can construct a scenario in which the uniqueness of the fixed point is guaranteed for such mappings.

\begin{theorem}\label{29}
Let $T$ be a mapping contracting hypotenuse of a right-angled triangle defined over a metric space $(X,d)$ with $|X|\ge 3$ and $T$ does not contain periodic points of prime period 2. Assume that $X$ contains infinitely many points such that for $x_0 \in X$, the iterative sequence 
\begin{align*}
   \{ x_0, \, x_1 = Tx_0, \, x_2 = Tx_1, \dots \}
\end{align*}
converges to a point $u \in X$ with $u \neq x_n$ for all $n \in \mathbb{N} \cup \{0\}$. Then $u$ is the unique fixed point of $T$.
\end{theorem}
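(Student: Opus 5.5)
The plan is to show first that $u$ is a fixed point and then that no other fixed point exists.

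\emph{Step 1: $u$ is a fixed point.} By Theorem \ref{24}, $T$ is continuous. Since $x_n\to u$, we get $x_{n+1}=Tx_n\to Tu$. Uniqueness of limits in a metric space then gives $Tu=u$.

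\emph{Step 2: no second fixed point.} Suppose $v\in Fix(T)$ with $v\neq u$. Theorem \ref{26} allows at most two fixed points, so it cannot be used to exclude $v$ directly. Instead I would compare with the orbit points. Because $u\neq x_n$ for every $n$ and $x_n\to u$, infinitely many $x_n$ are distinct; it suffices that $x_n\neq v$ for all large $n$. If $x_n=v$ for some $n$, then $x_m=v$ for all $m\ge n$, hence $u=v$, contradicting $v\neq u$. So for all large $n$ the triple $\{x_n,u,v\}$ is pairwise distinct.

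Apply \eqref{e21} to this triple with $u$ as the vertex (the middle point). Using $Tu=u$, $Tv=v$ and $Tx_n=x_{n+1}$, this gives
\begin{align*}
[d(x_{n+1},u)]^2+[d(u,v)]^2\le \beta\left([d(x_n,u)]^2+[d(u,v)]^2\right).
\end{align*}
Now let $n\to\infty$. Both $d(x_{n+1},u)\to 0$ and $d(x_n,u)\to 0$, so the inequality becomes $[d(u,v)]^2\le \beta [d(u,v)]^2$. Since $\beta<1$, this forces $d(u,v)=0$, a contradiction. Hence $u$ is the unique fixed point.

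The main subtlety is Step 2: choosing $u$ as the middle vertex so that the $d(u,v)$ terms appear on both sides with coefficients $1$ and $\beta$. This step also needs pairwise distinctness of the triple, which is where the hypothesis $u\neq x_n$ is used. The assumption that $T$ has no periodic point of prime period $2$ is not really needed for this argument.
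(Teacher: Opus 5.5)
Your proof is correct and follows the paper's argument. Both apply \eqref{e21} to the pairwise distinct triple $\{x_n,u,v\}$ and let $n\to\infty$ to reach $[d(u,v)]^2\le\beta[d(u,v)]^2$; the paper puts $x_n$ as the middle vertex and shows the ratio $R_n\to 1$, while you put $u$ there. Your Step 1 via the continuity of Theorem \ref{24} is slightly cleaner than the paper's appeal to Theorem \ref{25}, which formally requires completeness, a hypothesis this statement does not include.
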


\begin{proof}
From Theorem~\ref{25}, it follows that $u$ is a fixed point of $T$. Let $v$ be another fixed point of $T$. Then $v \neq x_n$, for all $n \in \mathbb{N} \cup \{0\}$, otherwise we have $u = v$. Therefore, $u, v$ and $x_n$ are distinct for any $n \in \mathbb{N} \cup \{0\}$. Now, for all $n \in \mathbb{N}\cup \{0\}$, we get
\begin{align*}
R_n&= \dfrac{{[d(Tu, Tx_n)]}^2 + {[d(Tx_n, Tv)]}^2}{{[d(u, x_n)]}^2 + {[d(x_n, v)]}^2}\\
&= \dfrac{{[d(u, x_{n+1})]}^2 + {[d(x_{n+1}, v)]}^2}{{[d(u, x_n)]}^2 + {[d(x_n, v)]}^2}.
\end{align*}
Now, taking $n \to \infty$, we get $R_n \to 1$, which leads to a contradiction to the fact that $R_n\le 1$ for all $n \in \mathbb{N} \cup \{0\}$ from (\ref{e21}). Hence, $Fix(T)= \{u\}$.
\end{proof}
The section concludes with a theorem that establishes Ulam–Hyers stability for the fixed point equation arising from mappings contracting hypotenuse of a right-angled triangle.

\begin{theorem}\label{210}
    Let $T$ be a mapping contracting hypotenuse of a right-angled triangle defined over a metric space $(X,d)$ with $|X| \ge 3$. If $T$ has two fixed points in $X$, then the fixed point equation $x=Tx$ is Ulam--Hyers stable.
\end{theorem}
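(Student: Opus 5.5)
Let $u,w$ be the two distinct fixed points of $T$; by Theorem \ref{26} these are all of them. Take $\varepsilon>0$ and $v\in X$ with $d(v,Tv)\le\varepsilon$. The aim is to find a constant $K>0$, depending only on $\beta$, such that $\min\{d(v,u),d(v,w)\}\le K\varepsilon$. The plan is to feed the triangle formed by $v$ and the two fixed points into \eqref{e21}, choosing the vertex order so that the fixed points sit at the ends and $v$ is in the middle.

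\textbf{Trivial cases.} If $v\in\{u,w\}$ there is nothing to prove. So assume $u,v,w$ are pairwise distinct. Apply \eqref{e21} with $x=u$, $y=v$, $z=w$. Since $Tu=u$ and $Tw=w$, this gives
\begin{align*}
[d(u,Tv)]^2+[d(Tv,w)]^2\le\beta\left([d(u,v)]^2+[d(v,w)]^2\right).
\end{align*}

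\textbf{Main estimate.} Put $a=d(u,v)$, $b=d(v,w)$ and $s=\sqrt{a^2+b^2}$. By the triangle inequality, $d(u,Tv)\ge a-\varepsilon$ and $d(w,Tv)\ge b-\varepsilon$. The left side is the square of the Euclidean norm of the vector $(d(u,Tv),d(Tv,w))$. By the reverse triangle inequality in $\mathbb{R}^2$, that norm is at least $s-\sqrt2\,\varepsilon$. Some care is needed when $a<\varepsilon$ or $b<\varepsilon$, but the bound $\|(p,q)\|\ge\|(a,b)\|-\|(a-p,b-q)\|$, together with $|a-d(u,Tv)|\le\varepsilon$ and $|b-d(w,Tv)|\le\varepsilon$, handles this uniformly. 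Hence $s-\sqrt2\,\varepsilon\le\sqrt\beta\, s$, that is,
\[
s\le\frac{\sqrt2}{1-\sqrt\beta}\,\varepsilon .
\]
In particular $d(u,v)\le s\le K\varepsilon$ with $K=\sqrt2/(1-\sqrt\beta)$, so we may take the fixed point $u$. This $K$ is independent of $v$ and $\varepsilon$, which is exactly Ulam--Hyers stability.

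\textbf{Main obstacle.} The delicate point is that \eqref{e21} only controls sums of two squared distances, not individual distances. The step to handle carefully is therefore the vector reverse-triangle inequality that turns the perturbation $d(v,Tv)\le\varepsilon$ into an error of $\sqrt2\,\varepsilon$ on the square-root quantity $s$. An alternative is to expand $(a-\varepsilon)_+^2+(b-\varepsilon)_+^2\le\beta(a^2+b^2)$ directly, but the norm argument avoids the case split. The hypothesis that there are two fixed points is used only to supply the third vertex $w$ of the triangle $u,v,w$, so that \eqref{e21} can be applied at all.
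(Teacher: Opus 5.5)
Your proof is correct, but it takes a different route from the paper's. The paper puts a fixed point in the middle vertex and applies \eqref{e21} to the triple $(p,x,y)$. Since $d(Tx,Ty)=d(x,y)$, the common leg $[d(x,y)]^2$ appears on both sides and, because $\beta<1$, can be absorbed. This leaves the one-point estimate $d(Tp,x)\le\sqrt\beta\,d(p,x)$ for every $p\ne y$, and the usual triangle-inequality argument then gives $d(z,x)\le\varepsilon/(1-\sqrt\beta)$. You instead put the approximate solution $v$ in the middle, so both legs are perturbed, and you absorb the perturbation with the reverse triangle inequality for the Euclidean norm on $\mathbb{R}^2$.

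The paper's route buys a constant smaller by a factor $\sqrt2$. It also yields a reusable structural fact: $T$ contracts distances to each fixed point, except at the other fixed point. Your route buys control of $\sqrt{d(u,v)^2+d(v,w)^2}$ in a single application of \eqref{e21}, so $v$ is shown to be close to both fixed points at once. The paper's lemma, applied with the roles of the two fixed points swapped, gives the same.

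Your explicit treatment of $v\in\{u,w\}$ is slightly more careful than the paper. The paper's estimate $d(Tz,x)\le\sqrt\beta\,d(z,x)$ fails for $z=y$, and there one must simply take the fixed point $y$ itself.

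Two minor remarks. The appeal to Theorem~\ref{26} is unnecessary, since only the existence of the two fixed points is used. The caveat about $a<\varepsilon$ or $b<\varepsilon$ is moot: the norm inequality holds regardless, and if $s<\sqrt2\,\varepsilon$ the conclusion is immediate.
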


\begin{proof}
    Let $x$ and $y$ be two distinct fixed points of $T$. Then for any $p \notin \{x, y\}$, we have
\begin{align*}
     [d(Tp,x)]^2 + [d(x,y)]^2 &= [d(Tp,Tx)]^2 + [d(Tx,Ty)]^2\\ 
    &\leq \beta ([d(p,x)]^2 + [d(x,y)]^2)\\
    &\leq \beta [d(p,x)]^2 + [d(x,y)]^2\\
    \implies [d(Tp,x)]^2 &\leq \beta [d(p,x)]^2\\
    \implies d(Tp,x) &\leq \sqrt{\beta} d(p,x).
\end{align*}

\noindent 
Therefore, for any $\epsilon > 0$, let $z \in X$ be an $\varepsilon$-solution, i.e., $$d(z,Tz) \leq \varepsilon.$$
Now, we have
\begin{align*}
    d(z,x) &\le d(z,Tz) + d(Tz,x) \\
    &\le d(z,Tz) + \sqrt{\beta} d(z,x) \\
    \implies (1- \sqrt{\beta}) d(z,x) &\le d(z,Tz)\\
    \implies d(z,x) &\le \frac{1}{1- \sqrt{\beta}} d(z,Tz) \le \frac{\varepsilon}{1- \sqrt{\beta}}.
\end{align*}
which implies that
\begin{align*}
    d(z,x) \le K \varepsilon
\end{align*}
where $K=\frac{1}{1- \sqrt{\beta}}$. Hence, the result follows.
\end{proof}

\section{Geometrical viewpoint of the mapping contracting hypotenuse of a right-angled triangle}



This section develops a geometric interpretation of the proposed mapping and rigorously establishes its distinction from both perimetric contraction mappings and area contraction mappings. \\
Consider a right-angled triangle with base $t$ and height $r$, whose hypotenuse length is 
\[
L = \sqrt{r^2 + t^2}.
\]In the Euclidean space $(\mathbb{R}^2,d)$, suppose a mapping $T$ contracts the hypotenuse of a right-angled triangle $\triangle x_1x_2x_3$ with contraction constant $\beta \in [0,1)$. If the image under $T$ is again a right-angled triangle $\triangle Tx_1Tx_2Tx_3$, then we obtain
\[
L_2 = [d(Tx_1,Tx_2)]^2 + [d(Tx_2,Tx_3)]^2 
     \leq \beta \big( [d(x_1,x_2)]^2 + [d(x_2,x_3)]^2 \big) 
     = \beta L_1 < L_1.
\]
Figure 1 provides a pictorial representation of this mapping.
\newpage
\begin{figure}[htbp]
\centering
\begin{tikzpicture}[line width=1pt, >=Latex]

    \coordinate (x2) at (0,0);
    \coordinate (x1) at (0,3.5);
    \coordinate (x3) at (5,0);
    
    \filldraw[
    fill=orange!25,
    draw=black,
    very thick
] (x1) -- (x2) -- (x3) -- cycle;
    
    \node[above left] at (x1) {$x_1$};
    \node[below left] at (x2) {$x_2$};
    \node[below right] at (x3) {$x_3$};
    
    \node[below=0.5cm] at (2.2, 0) {Triangle S};

    \draw[line width=1pt, double, double distance=5pt, -{Implies[]}] (5.5, 1.5) -- (6.5, 1.5);
    \node[above=0.2cm] at (6.0, 1.5) {$T$};

    \coordinate (Tx2) at (7.5,0);
    \coordinate (Tx1) at (7.5,3.0);
    \coordinate (Tx3) at (11.5,0);
    
    \filldraw[
    fill=red!25,
    draw=black,
    very thick
] (Tx1) -- (Tx2) -- (Tx3) -- cycle;
    
    \node[above left] at (Tx1) {$Tx_1$};
    \node[below left] at (Tx2) {$Tx_2$};
    \node[below right] at (Tx3) {$Tx_3$};
    
    \node[below=0.5cm] at (9.5, 0) {Triangle S$'$};

\end{tikzpicture}
\caption{Image of a right-angled triangle under the mapping $T$}
\end{figure}


We now turn to a specific example demonstrating that hypotenuse contraction  does not necessarily imply a contraction of perimeter or area of triangles.
\begin{example}
Consider 
\begin{align*}
    X=\{x_1,x_2,x_3,y_1,y_2,y_3\}.
\end{align*}
Define a metric $d$ on $X$ by
\begin{align*}
d(x_1,x_2)&=2,\qquad
d(x_2,x_3)=2,\qquad
d(x_3,x_1)=\frac{1}{5},\\
d(y_1,y_2)&=\frac{8}{5},\qquad
d(y_2,y_3)=\frac{8}{5},\qquad
d(y_3,y_1)=\frac{11}{10},
\end{align*}
and
\begin{align*}
    d(x_i,y_j)=2, \text{ for } i,j\in\{1,2,3\}.
\end{align*}
The remaining values of $d$ are determined by symmetry and $d(x,x)=0$. It is evident that $(X,d)$ is a metric space.

Define $T:X\to X$ by
\begin{align*}
T(x_1)=y_1,\qquad
T(x_2)=y_2,\qquad
T(x_3)=y_3, 
\end{align*}
and
\begin{align*}
    T(y_1)=T(y_2)=T(y_3)=y_1.
\end{align*}

To establish that $T$ is a mapping contracting hypotenuse of a right-angled triangle, we begin by selecting three distinct points $x, y, z \in X$. For each admissible triplet of these points, it is necessary to verify whether the ratio
\begin{align*}
    R=\frac{[d(Tx,Ty)]^2 +[d(Ty,Tz)]^2}{[d(x,y)]^2 +[d(y,z)]^2}
\end{align*}
satisfies $R \in [0,1)$. Since the total number of possible triplets is 120 (with half of them being identical, leaving 60 distinct cases), we proceed in an orderly manner, beginning with the initial triplet $(x_{1}, x_{2}, x_{3})$. The analysis for the remaining triplets follows in the same fashion.
\begin{align*}
[d(Tx_1,Tx_2)]^2+[d(Tx_2,Tx_3)]^2
=\left(\frac{8}{5}\right)^2+
  \left(\frac{8}{5}\right)^2
=\frac{128}{25},
\end{align*}
whereas
\begin{align*}
    [d(x_1,x_2)]^2+[d(x_2,x_3)]^2 =4+4=8.
\end{align*}
Thus,
\begin{align*}
    \frac{[d(Tx_1,Tx_2)]^2+[d(Tx_2,Tx_3)]^2}{[d(x_1,x_2)]^2+[d(x_2,x_3)]^2}=\frac{16}{25}<1.
\end{align*}
On the other hand,
\begin{align*}
[d(Tx_2,Tx_3)]^2+[d(Tx_3,Tx_1)]^2
=\left(\frac{8}{5}\right)^2+
  \left(\frac{11}{10}\right)^2
=\frac{64}{25}+\frac{121}{100}
=\frac{377}{100},
\end{align*}
while
\begin{align*}
[d(x_2,x_3)]^2+[d(x_3,x_1)]^2
=4+\frac{1}{25}
=\frac{101}{25}.
\end{align*}
Consequently,
\begin{align*}
    \frac{[d(Tx_2,Tx_3)]^2+[d(Tx_3,Tx_1)]^2}{[d(x_2,x_3)]^2+[d(x_3,x_1)]^2}
    =\frac{377}{404}<1.
\end{align*}
Next, we have
\begin{align*}
[d(Tx_2,Tx_1)]^2+[d(Tx_1,Tx_3)]^2
=\left(\frac{8}{5}\right)^2 + \left(\frac{11}{10}\right)^2
=\frac{377}{404},
\end{align*}
whereas
\begin{align*}
   [d(x_2,x_1)]^2+[d(x_1,x_3)]^2 =4+\frac{1}{25} =\frac{101}{25}. 
\end{align*}
Thus,
\begin{align*}
    \frac{[d(Tx_2,Tx_1)]^2+[d(Tx_1,Tx_3)]^2}{[d(x_2,x_1)]^2+[d(x_1,x_3)]^2}
    =\frac{377}{404}<1.
\end{align*}
Upon checking all cases, it is observed that  
\begin{align*}
    \sup_{\substack{x,y,z\in X\\
x \neq y \neq z}}
\frac{[d(Tx,Ty)]^2+[d(Ty,Tz)]^2}{[d(x,y)]^2+[d(y,z)]^2}
=\frac{377}{404}.
\end{align*}
Therefore, we have
\begin{align*}
    [d(Tx,Ty)]^2+[d(Ty,Tz)]^2 \leq \beta \left([d(x,y)]^2+[d(y,z)]^2 \right),
\end{align*}
for every three pairwise distinct points $x,y,z\in X$ with $\beta \in [\frac{377}{404}, 1)$.\\
Therefore, $T$ is a mapping contracting hypotenuse of a right-angled triangle.

However, $T$ is not a mapping contracting perimeters of triangles.
Indeed, consider the triangle $(x_1,x_2,x_3)$. Its perimeter is
\begin{align*}
P(x_1,x_2,x_3)
=d(x_1,x_2)+d(x_2,x_3)+d(x_3,x_1)
=2+2+\frac{1}{5}
=\frac{21}{5}.
\end{align*}
The perimeter of its image triangle is
\begin{align*}
P(Tx_1,Tx_2,Tx_3)
=d(y_1,y_2)+d(y_2,y_3)+d(y_3,y_1)
=\frac{8}{5}+\frac{8}{5}+\frac{11}{10}
=\frac{43}{10}.
\end{align*}
Thus, we have
\begin{align*}
    P(Tx_1,Tx_2,Tx_3)>P(x_1,x_2,x_3).
\end{align*}
Therefore, there does not exist any $\alpha<1$ such that
\begin{align*}
    d(Tx,Ty)+d(Ty,Tz)+d(Tz,Tx) \leq \alpha [d(x,y)+d(y,z)+d(z,x)],
\end{align*}
for all pairwise distinct $x,y,z\in X$.

Even, $T$ is not a mapping contracting area of a right-angled triangle.
Indeed, consider the triangle $(x_2,x_1,x_3)$. Its area is
\begin{align*}
A(x_2,x_1,x_3)
= \frac{1}{2} \ d(x_2,x_1)\ d(x_1,x_3)
=\frac{1}{5}.
\end{align*}

The area of its image triangle is
\begin{align*}
A(Tx_2,Tx_1,Tx_3)
= \frac{1}{2}\ d(Tx_2,Tx_1)\ d(Tx_1,Tx_3)
=\frac{1}{2} d(y_2,y_1)\ d(y_1,y_3)
=\frac{22}{25}.
\end{align*}
Thus, we have
\begin{align*}
   A(Tx_2,Tx_1,Tx_3) > A(x_2,x_1,x_3). 
\end{align*}
Therefore, there does not exist any $\alpha<1$ such that
\begin{align*}
    d(Tx,Ty)d(Ty,Tz) \leq \alpha d(x,y)d(y,z),
\end{align*}
for all pairwise distinct $x,y,z\in X$.

Thus, $T$ is a mapping contracting hypotenuse of a right-angled triangle which is neither a mapping contracting perimeters of triangles nor a mapping contracting area of a right-angled triangle.
\end{example}
\newpage
Figure 2 presents an pictorial illustration of a right‑angled triangle whose hypotenuse is contracted under the given mapping, yet the perimeter of the resulting image triangle is unexpectedly larger. 
\begin{figure}[htbp]
\centering
\begin{tikzpicture}[font=\small]

\draw[thick] (-0.5,-0.5) rectangle (13.8,9.5);


\node[font=\bfseries\large] at (2.5,9)
{Right-angled triangle plot};

\node[font=\bfseries\large] at (10,9)
{Isosceles right-angled triangle plot};


\draw (1,6) rectangle (5,8.2);

\node[anchor=north west,font=\bfseries]
at (1.2,8.0){Triangle Metrics:};

\node[anchor=north west,align=left]
at (1.2,7.5)
{
$\bullet$ Perimeter $\approx25.04$\\[2mm]
$\bullet$ Area = 6
};


\coordinate (A) at (1,0.8);
\coordinate (B) at (6,0.8);
\coordinate (C) at (1,1.3);
\filldraw[
    fill=cyan!25,
    draw=black,
    very thick
] (A)--(B)--(C)--cycle;

\node[below] at (3,0.8)
{Base = 12};

\node[rotate=90,left]
at (0.5,2)
{Height = 1};

\node[above]
at (3,1.5)
{Hypotenuse $\approx12.04$};

    \draw[line width=1pt, double, double distance=5pt, -{Implies[]}] (5.5, 4.5) -- (6.5, 4.5);
    \node[above=0.2cm] at (5.8, 4.5) {$T_1$};


\draw (9.5,6) rectangle (13.3,8.2);

\node[anchor=north west,font=\bfseries]
at (9.7,8.0){Triangle Metrics:};

\node[anchor=north west,align=left]
at (9.7,7.5)
{
$\bullet$ Perimeter $\approx29.02$\\[2mm]
$\bullet$ Area = 36.125
};


\coordinate (P) at (6.8,0.8);
\coordinate (Q) at (11.6,0.8);
\coordinate (R) at (6.8,5.9);
\filldraw[
    fill=green!25,
    draw=black,
    very thick
] (P)--(Q)--(R)--cycle;

\node[below]
at (8.8,0.8)
{Base = 8.5};

\node[rotate=90,left]
at (6.2,4)
{Height = 8.5};

\node[rotate=-45]
at (9.8,3.5)
{Hypotenuse $\approx12.02$};

\node[font=\bfseries\large] at (2.5,0)
{Triangle A};

\node[font=\bfseries\large] at (10,0)
{Triangle B};
\end{tikzpicture}

\begin{equation*}
\text{Triangle } A \; \rightarrow \; 
\begin{array}{|c|}
\hline
T_1\colon \mathbb{R}^2 \rightarrow \mathbb{R}^2 \text{ defined by} \\
T_1(x, y) = \left( \frac{17}{24}x, \frac{17}{2}y \right), \, x, y \in \mathbb{R} \\
\hline
\end{array}
\; \rightarrow \; \text{Triangle } B
\end{equation*}
\caption{Images of two triangles $A$ and $B$ such that the hypotenuse of $A$ is contracted to the hypotenuse of $B$ under the mapping $T_1$, while the perimeter of triangle $A$ remains smaller than that of triangle $B$}
\end{figure}
\newpage
Finally, Figure 3 presents an pictorial illustration of a right‑angled triangle whose hypotenuse is contracted under the given mapping, yet the area of the resulting image triangle is unexpectedly increased. 

\begin{figure}[htbp]
\centering
\begin{tikzpicture}[font=\small]

\draw[thick] (-0.5,-0.5) rectangle (13.8,9.5);


\node[font=\bfseries\large] at (2.5,9)
{Right-angled triangle plot};

\node[font=\bfseries\large] at (10,9)
{Right-angled triangle plot};


\draw (1,6) rectangle (5,8.2);

\node[anchor=north west,font=\bfseries]
at (1.2,8.0){Triangle Metrics:};

\node[anchor=north west,align=left]
at (1.2,7.5)
{
$\bullet$ Perimeter $\approx21.05$\\[2mm]
$\bullet$ Area = 5
};


\coordinate (A) at (1,0.8);
\coordinate (B) at (6,0.8);
\coordinate (C) at (1,1.3);
\filldraw[
    fill=purple!25,
    draw=black,
    very thick
] (A)--(B)--(C)--cycle;

\node[below] at (3,0.8)
{Base = 10};

\node[rotate=90,left]
at (0.5,2)
{Height = 1};

\node[above]
at (3,1.5)
{Hypotenuse $\approx10.05$};

    \draw[line width=1pt, double, double distance=5pt, -{Implies[]}] (5.5, 4.5) -- (6.5, 4.5);
    \node[above=0.2cm] at (5.8, 4.5) {$T_2$};


\draw (9.5,6) rectangle (13.3,8.2);

\node[anchor=north west,font=\bfseries]
at (9.7,8.0){Triangle Metrics:};

\node[anchor=north west,align=left]
at (9.7,7.5)
{
$\bullet$ Perimeter = 24\\[2mm]
$\bullet$ Area = 24
};


\coordinate (P) at (7,0.8);
\coordinate (Q) at (10,0.8);
\coordinate (R) at (7,4.8);
\filldraw[
    fill=violet!25,
    draw=black,
    very thick
] (P)--(Q)--(R)--cycle;

\node[below]
at (8.8,0.8)
{Base = 6};

\node[rotate=90,left]
at (6.5,4)
{Height = 8};

\node[rotate=-50]
at (9,3.5)
{Hypotenuse $=10$};

\node[font=\bfseries\large] at (3,0)
{Triangle C};

\node[font=\bfseries\large] at (8.8,0)
{Triangle D};
\end{tikzpicture}
\begin{equation*}
\text{Triangle } C \; \rightarrow \; 
\begin{array}{|c|}
\hline
T_2\colon \mathbb{R}^2 \rightarrow \mathbb{R}^2 \text{ defined by} \\
T_2(x, y) = \left( \frac{3}{5}x, 8y \right), \, x, y \in \mathbb{R} \\
\hline
\end{array}
\; \rightarrow \; \text{Triangle } D
\end{equation*}
\caption{Illustration of two triangles $C$ and $D$ where the hypotenuse of $C$ is contracted to that of $D$ under the mapping $T_2$, while the area of triangle $C$ remains smaller than the area of triangle $D$}
\end{figure}



\section{Application to a nonhomogeneous linear parabolic partial differential equation}

This section deals with an application of our theorem to establish the existence and uniqueness of a solution to a nonhomogeneous linear parabolic partial differential equation satisfying a given initial condition. Consider the following problem:
\begin{align}\label{31}
\begin{cases}
u_t(x,t)=u_{xx}(x,t)-a u(x,t)+\mathcal{G}(x,t,u(x,t),u_x(x,t)), & \text{for } 
-\infty<x<\infty,  \\
&0<t\le T, a>0,  \\
u(x,0)=\varphi(x), & \text{if } -\infty<x<\infty.
\end{cases}
\end{align}

It is assumed that $\varphi$ is continuously differentiable such that $\varphi$ and $\varphi'$ are bounded and $\mathcal{G}$ is continuous. 

By a solution of the system (\ref{31}) we meant a function $u(x,t)$ defined on $\mathbb{R}\times I$, where $I=[0,T], T>0$, satisfying the following conditions:

(i) $u,u_t,u_x$ and $u_{xx}\in \mathcal{C}(\mathbb{R}\times I)$, where $\mathcal{C}(\mathbb{R}\times I)$ is the space of all continuous functions defined on $\mathbb{R}\times I$;

(ii) $u$ and $u_x$ are bounded in $\mathbb{R}\times I$;

(iii) $u_t(x,0)=u_{xx}(x,0)-a u(x,0)+\mathcal{G}(x,0,u(x,0),u_x(x,0))$ for $-\infty<x<\infty$;

(iv) $u(x,0)=\varphi(x)$ for $-\infty<x<\infty$.

Consider $$ \mathcal{B}=\{u(x,t):u,u_x\in \mathcal{C}(\mathbb{R}\times I)\text{ and }\|u\|^1<\infty\}, $$

where $$\|u\|^1=\sup_{(x,t)\in \mathbb{R}\times I}|u(x,t)|+\sup_{(x,t)\in \mathbb{R}\times I}|u_x(x,t)|. $$

Then $(\mathcal{B},\|\cdot\|^1)$ is a Banach space.

\begin{theorem}\label{31.5}
Now consider the problem $(\ref{31})$ with the following conditions:

$(a)$ For any $c>0$ with $|s|<c$ and $|p|<c$, the function $\mathcal{G}(x,t,s,p)$ is uniformly H\"older continuous in $x$ and $t$ for each compact subset of $\mathbb{R}\times I$;

$(b)$ There exists a constant
\begin{align}\label{32}
0<c_\mathcal{G}\le \frac{a}{\sqrt{a}+1}r, \text{ with }0<r<\frac{1}{\sqrt{2}} \text{ for all }(s_1,p_1),(s_2,p_2)\in \mathbb{R}^2 
\end{align} 
such that 
\begin{align}\label{33}
|\mathcal{G}(x,t,s_1,p_1)-\mathcal{G}(x,t,s_2,p_2)|\le c_\mathcal{G}[|s_1-s_2|+|p_1-p_2|].
\end{align}

$(c)$ $\mathcal{G}$ is bounded whenever $s$ and $p$ are bounded.\\
Then, the existence and uniqueness of the solution of the system $(\ref{31})$ is necessarily affirmative.
\end{theorem}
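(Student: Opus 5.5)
The plan is to recast (\ref{31}) as a fixed point problem on the Banach space $(\mathcal{B},\|\cdot\|^1)$ by means of the heat kernel with damping, and then to apply Proposition \ref{22} together with Theorems \ref{25} and \ref{29} (or simply Banach's principle). For $u\in\mathcal{B}$ we define
\begin{align*}
(\Gamma u)(x,t)=\int_{-\infty}^{\infty}K(x-\xi,t)\varphi(\xi)\,d\xi+\int_0^t\int_{-\infty}^{\infty}K(x-\xi,t-\tau)\,\mathcal{G}(\xi,\tau,u(\xi,\tau),u_\xi(\xi,\tau))\,d\xi\,d\tau,
\end{align*}
where $K(x,t)=\frac{e^{-at}}{\sqrt{4\pi t}}e^{-x^2/(4t)}$. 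This is the fundamental solution of $u_t=u_{xx}-au$.

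First we check that $\Gamma$ maps $\mathcal{B}$ into itself. Since $\varphi$ and $\varphi'$ are bounded, and $\mathcal{G}$ is bounded on bounded sets by $(c)$, both $\Gamma u$ and $(\Gamma u)_x$ are continuous and bounded. For the derivative we use $\int|K_x(x,t)|\,dx=\frac{e^{-at}}{\sqrt{\pi t}}$.

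Next, under the Hölder condition $(a)$, classical parabolic theory (Friedman's results on the volume potential) gives the following equivalence: a fixed point of $\Gamma$ has $u_t,u_{xx}$ continuous and solves (\ref{31}) with $u(x,0)=\varphi(x)$. Conversely, every solution satisfies this integral equation. We will quote this step rather than reprove it.

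The key estimate is the contraction bound. For $u,v\in\mathcal{B}$, condition (\ref{33}) gives $|\mathcal{G}(\cdot,u,u_x)-\mathcal{G}(\cdot,v,v_x)|\le c_\mathcal{G}\|u-v\|^1$. Using $\int K\,d\xi=e^{-a(t-\tau)}$, we get
\begin{align*}
|\Gamma u-\Gamma v|\le c_\mathcal{G}\|u-v\|^1\int_0^t e^{-a s}\,ds\le \frac{c_\mathcal{G}}{a}\|u-v\|^1.
\end{align*}
For the $x$-derivative, the kernel bound yields
\begin{align*}
c_\mathcal{G}\|u-v\|^1\int_0^\infty \frac{e^{-as}}{\sqrt{\pi s}}\,ds=\frac{c_\mathcal{G}}{\sqrt a}\|u-v\|^1.
\end{align*}
Adding the two, we obtain
\begin{align*}
\|\Gamma u-\Gamma v\|^1\le c_\mathcal{G}\Big(\frac1a+\frac1{\sqrt a}\Big)\|u-v\|^1=c_\mathcal{G}\frac{\sqrt a+1}{a}\|u-v\|^1\le r\|u-v\|^1
\end{align*}
by (\ref{32}). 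Hence $\Gamma$ is a Banach contraction with constant $r<\frac1{\sqrt2}$. By Proposition \ref{22}, $\Gamma$ is therefore a mapping contracting hypotenuse of a right-angled triangle with $\beta=2r^2<1$. Also $|\mathcal{B}|\ge3$.

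It remains to obtain existence and uniqueness. $\Gamma$ has no periodic point of prime period $2$: if $\Gamma^2u=u$ with $\Gamma u\ne u$, then $\|u-\Gamma u\|^1=\|\Gamma^2u-\Gamma u\|^1\le r\|\Gamma u-u\|^1$, which is a contradiction. Theorem \ref{25} then gives a fixed point. For uniqueness we argue directly via the contraction: two fixed points $u,v$ satisfy $\|u-v\|^1\le r\|u-v\|^1$, so $u=v$. This avoids the "at most two" ambiguity of Theorem \ref{26}.

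The main obstacle is the regularity step, namely showing that a fixed point of $\Gamma$ in $\mathcal{B}$ actually has continuous $u_t$ and $u_{xx}$. This is where hypothesis $(a)$ enters. We plan to cite the standard Hölder-potential theorems rather than derive them.
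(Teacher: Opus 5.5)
Your proposal is correct and follows the paper's route: the same damped heat-kernel integral operator on $(\mathcal{B},\|\cdot\|^1)$, the same bounds $c_\mathcal{G}/a$ and $c_\mathcal{G}/\sqrt{a}$ summing to $r<1/\sqrt{2}$, Proposition \ref{22} to get a hypotenuse contraction, and Theorem \ref{25} for existence. Your explicit check that there are no points of prime period $2$, and your direct contraction argument for uniqueness, are more careful than the paper. The paper simply cites Theorems \ref{25} and \ref{26} for uniqueness, although Theorem \ref{26} only shows the fixed point set has at most two elements.
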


\begin{proof}
It is essential to note that the problem (\ref{31}) is equivalent to the integral equation 
\begin{align}\label{34}
u(x,t)&=\int_{-\infty}^\infty e^{-at} K(x-\xi,t)\varphi(\xi)d\xi+\nonumber\\
&\int_0^t \int_{-\infty}^\infty e^{-a(t-\tau)} K(x-\xi,t-\tau)\mathcal{G}(\xi,\tau,u(\xi,\tau),u_x(\xi,\tau))d\xi d\tau,
\end{align}
for all $-\infty<x<\infty$ and $0<t\le T$, where $K(x,t)=\frac{1}{\sqrt{4\pi t}}e^{-\frac{x^2}{4t}}$ for all $x\in \mathbb{R}$ and $t>0$.\\
For $-\infty<x<\infty$ and $0<t\le T$, define 
\begin{align*}
(Tu)(x,t)&=\int_{-\infty}^\infty e^{-at} K(x-\xi,t)\varphi(\xi)d\xi+\\
&\int_0^t \int_{-\infty}^\infty e^{-a(t-\tau)} K(x-\xi,t-\tau)\mathcal{G}(\xi,\tau,u(\xi,\tau),u_x(\xi,\tau))d\xi d\tau.
\end{align*}
Then 
\begin{align*}
(Tu)_x(x,t)&=\int_{-\infty}^\infty e^{-at} K_x(x-\xi,t)\varphi(\xi)d\xi+\\
&\int_0^t \int_{-\infty}^\infty e^{-a(t-\tau)} K_x(x-\xi,t-\tau)\mathcal{G}(\xi,\tau,u(\xi,\tau),u_x(\xi,\tau))d\xi d\tau.
\end{align*}
We see that
\begin{align}\label{35}
&|(Tu)(x,t)-(Tv)(x,t)|\nonumber\\
=&\left\lvert\int_0^t \int_{-\infty}^\infty e^{-a(t-\tau)} K(x-\xi,t-\tau)[\mathcal{G}(\xi,\tau,u(\xi,\tau),u_x(\xi,\tau))-\mathcal{G}(\xi,\tau,v(\xi,\tau),v_x(\xi,\tau))]d\xi d\tau\right\rvert\nonumber\\
\le & \int_0^t \int_{-\infty}^\infty e^{-a(t-\tau)} K(x-\xi,t-\tau)|\mathcal{G}(\xi,\tau,u(\xi,\tau),u_x(\xi,\tau))-\mathcal{G}(\xi,\tau,v(\xi,\tau),v_x(\xi,\tau))|d\xi d\tau\nonumber\\
\le & \int_0^t e^{-a(t-\tau)} \int_{-\infty}^\infty  K(x-\xi,t-\tau)[|u(\xi,\tau)-v(\xi,\tau)|+|u_x(\xi,\tau)-v_x(\xi,\tau)|]d\xi d\tau\nonumber\\
\le & c_\mathcal{G} \|u-v\|^1\int_0^t e^{-a(t-\tau)} \int_{-\infty}^\infty  K(x-\xi,t-\tau)d\xi d\tau\nonumber\\
\le & \frac{c_\mathcal{G}}{a} \|u-v\|^1
\end{align}
and
\begin{align}\label{36}
&|(Tu)_x(x,t)-(Tv)_x(x,t)|\nonumber\\
=&\left\lvert\int_0^t \int_{-\infty}^\infty e^{-a(t-\tau)} K_x(x-\xi,t-\tau)[\mathcal{G}(\xi,\tau,u(\xi,\tau),u_x(\xi,\tau))-\mathcal{G}(\xi,\tau,v(\xi,\tau),v_x(\xi,\tau))]d\xi d\tau\right\rvert\nonumber\\
\le & \int_0^t \int_{-\infty}^\infty e^{-a(t-\tau)} |K_x(x-\xi,t-\tau)||\mathcal{G}(\xi,\tau,u(\xi,\tau),u_x(\xi,\tau))-\mathcal{G}(\xi,\tau,v(\xi,\tau),v_x(\xi,\tau))|d\xi d\tau\nonumber\\
\le & \int_0^t e^{-a(t-\tau)} \int_{-\infty}^\infty  |K_x(x-\xi,t-\tau)|[|u(\xi,\tau)-v(\xi,\tau)|+|u_x(\xi,\tau)-v_x(\xi,\tau)|]d\xi d\tau\nonumber\\
\le & c_\mathcal{G} \|u-v\|^1\int_0^t e^{-a(t-\tau)} \int_{-\infty}^\infty  |K_x(x-\xi,t-\tau)|d\xi d\tau\nonumber\\
\le & \frac{c_\mathcal{G}}{\sqrt{a}} \|u-v\|^1.
\end{align}
Therefore,
\begin{align}\label{37}
\|Tu-Tv\|^1=&\sup_{(x,t)\in \mathbb{R}\times I}|(Tu)(x,t)-(Tv)(x,t)|+\sup_{(x,t)\in \mathbb{R}\times I}|(Tu)_x(x,t)-(Tv)_x(x,t)|\nonumber\\
&\le \frac{c_\mathcal{G}}{a} \|u-v\|^1+\frac{c_\mathcal{G}}{\sqrt{a}} \|u-v\|^1\nonumber\\
&=c_\mathcal{G}\left(\frac{1}{a}+\frac{1}{\sqrt{a}}\right)\|u-v\|^1\nonumber\\
&=r \|u-v\|^1.
\end{align}
This shows that $T$ is a Banach contraction with contraction constant $r<\frac{1}{\sqrt{2}}$, that is, $T$ is a mapping contracting hypotenuse of a right-angled triangle due to Proposition \ref{22}. Also $T$ satisfies all the conditions of Theorem \ref{25} and Theorem \ref{26} and therefore $T$ has a unique fixed point. Hence (\ref{34}) has a unique solution, equivalent to the unique solution of the given partial differential equation (\ref{31}).
\end{proof}

\begin{example}\label{38}
Consider the following nonhomogeneous linear parabolic partial differential equation:
\begin{align}\label{e31}
\begin{cases}
u_t(x,t)=u_{xx}(x,t)-8 u(x,t)+\sqrt{|u(x,t)|^2+|u_x(x,t)|^2}, & \text{for } \text{$-\infty<x<\infty$, $0<t\le \frac{5\pi}{2}$},  \\
u(x,0)=x^2+\sin(\pi x)+\cos\left(\frac{\pi}{2}x\right), & \text{if } -\infty<x<\infty.
\end{cases}
\end{align}
Then $\mathcal{G}(x,t,u(x,t),u_x(x,t)=\sqrt{|u(x,t)|^2+|u_x(x,t)|^2}$ and
\begin{align*}
&|\mathcal{G}(x,t,u(x,t),u_x(x,t))-\mathcal{G}(x,t,v(x,t),v_x(x,t))|\\
=&|\sqrt{|u(x,t)|^2+|u_x(x,t)|^2}-\sqrt{|v(x,t)|^2+|v_x(x,t)|^2}|\\
\le & \sqrt{|u(x,t)-v(x,t)|^2+|u_x(x,t)-v_x(x,t)|^2}\\
\le & |u(x,t)-v(x,t)|+|u_x(x,t)-v_x(x,t)|.
\end{align*}
Showing that $c_\mathcal{G}=1$ and we see that for $r=\frac{1}{2}$, $c_\mathcal{G}\le \frac{8}{2\sqrt{2}+1}\times\frac{1}{2}$. Therefore, all the conditions of Theorem \ref{31.5} are satisfied and (\ref{e31}) possesses a unique solution.
\end{example}

\section{Conclusion}
In this article, mappings contracting hypotenuse of a right-angled triangle are explored in the context of metric spaces. A sufficient condition is established for the existence of fixed points of these mappings. It is observed that the absence of periodic points of prime period 2 is necessary to ensure the existence of fixed points. These mappings can attain at most two fixed points. Thus, a necessary condition is derived for the uniqueness of the fixed point. Subsequently, a geometric interpretation of the proposed mapping is provided to establish its distinction from both perimeter contracting and area contracting mappings through pictorial representation. In addition, illustrative examples are provided to support the theoretical findings. Next, we investigate the Ulam-Hyers stability of the fixed point equation related to newly introduced mappings. Finally, we apply the theoretical findings to establish the existence of a solution for a nonhomogeneous linear parabolic partial differential equation.


\subsection*{Availability of data and materials}
Not applicable.

\subsection*{Competing interests}
The authors declare that they have no competing interests.

\subsection*{Funding}
Not applicable.

\subsection*{Author's contributions}
All authors contributed equally and significantly in writing this paper.
All authors read and approved the final manuscript.

\bibliographystyle{plain}

\begin{thebibliography}{00}

\bibitem{B22}
S.~Banach.
\newblock Sur les op\'erations dans les ensembles abstraits et leur application aux \'equations int\'egrales.
\newblock {\em Fund. Math.}, 3:133--181, 1922.

\bibitem{BMD2}
A.~Banerjee, P.~Mondal, and L.~K. Dey.
\newblock Perimetric contractions of Kannan and Chatterjea type in quadrilaterals.
\newblock {\em To appear in Mat. Vesn.}, 2025.
	
\bibitem{BMD1}
A.~Banerjee, P.~Mondal, and L.K. Dey.
\newblock Perimetric contraction on quadrilaterals.
\newblock {\em Science {\&} Technology Asia}, 30(4), 55--67, 2025.

\bibitem{BMD3}
A.~Banerjee, P.~Mondal, and L.K. Dey.
\newblock On generalized Chatterjea type mappings in interpolative metric spaces.
\newblock {\em communicated}, 2025.

\bibitem{BMDS26}
A.~Banerjee, P.~Mondal, L.K. Dey, and W. Sintunavarat.
\newblock A perspective on Banach and Kannan mappings contracting the perimeter of triangles in interpolative metric spaces.
\newblock {\em Nonlinear Anal. Model. Control, 31, pp. 1–21. doi:10.15388/namc.2026.31.47663.}, 2026.

\bibitem{BPS25}
C.~Bey, E.~Petrov, and R.~Salimov.
\newblock On three-point generalizations of {B}anach and {E}delstein fixed point theorems.
\newblock {\em Filomat}, 39(1):185--195, 2025.

\bibitem{BP24}
R.K.~Bisht and E.~Petrov.
\newblock Three point analogue of {Ć}irić-{R}eich-{R}us type mappings with non-unique fixed points.
\newblock {\em J. Anal.}, 32(5):2609-2627, 2024.

\bibitem{H41}
D.H.~Hyers.
\newblock On the stability of the linear functional equation.
\newblock {\em Proc. Natl. Acad. Sci. USA}, 27(4):222-224, 1941.

\bibitem{JPS25}
M.~Jleli, C.M.~P{\u{a}}curar and B. Samet.
\newblock Fixed point results for contractions of polynomial type.
\newblock {\em Demonstr. Math.}, 58(1):20250098, 2025.

\bibitem{PP24}
C.M. P{\u{a}}curar and O.~Popescu.
\newblock Fixed point theorem for generalized {C}hatterjea type mappings.
\newblock {\em Acta. Math. Hungar.}, 1-10, 2024.

\bibitem{P23}
E.~Petrov.
\newblock Fixed point theorem for mappings contracting perimeters of triangles.
\newblock {\em J. Fixed Point Theory Appl.}, 25:74, 2023.

\bibitem{PB24}
E.~Petrov and R.K. Bisht.
\newblock Fixed point theorem for generalized {K}annan type mappings.
\newblock {\em Rend. Circ. Mat. Palermo, II. Ser}, 73(2024), 2895--2912, 2024.

\bibitem{R26a}
K.~Roy.
\newblock Applications of a mapping contracting area of geometrical figures.
\newblock {\em The Journal of Analysis}, 34(2):1197 - 1214, 2026.

\bibitem{R24}
K.~Roy.
\newblock Mappings contracting axes of ellipse.
\newblock {\em The Journal of Analysis}, 32(6):3557--3563, 2024.

\bibitem{R26b}
K.~Roy.
\newblock Mappings contracting transverse axis of hyperbola.
\newblock {\em Appl. Gen. Topol.}, 27(1):23621, 2026.
\end{thebibliography}

\end {document}